\newtheorem{Th}{Theorem}
\newtheorem{Theorem}{Theorem}
\numberwithin{Theorem}{subsection} 
\newtheorem{Lemma}[Theorem]{Lemma}
\newtheorem{Proposition}[Theorem]{Proposition}
\newtheorem{Corollary}[Theorem]{Corollary}
\theoremstyle{definition}
\newtheorem{Definition}{Definition}
\numberwithin{Definition}{subsection} 
\theoremstyle{remark}
\newtheorem{Remark}[Theorem]{Remark} 
\numberwithin{equation}{section}
\newcommand{\nm}{\nabla^{\mu}}
\newcommand{\sk}{\operatorname {skew}}
\newcommand{\sym}{\operatorname {sym}}
\renewcommand{\l}{\lambda}
\begin{document}
\title{A characterization of the alpha-connections 
on the statistical manifold of normal distributions}
\author[H.~Furuhata]{Hitoshi Furuhata}
\address{Department of Mathematics, Hokkaido University, 
Sapporo, 060-0810, Japan}
\email{furuhata@math.sci.hokudai.ac.jp}
\author[J.~Inoguchi]{Jun-ichi Inoguchi}
\address{Institute of Mathematics, 
University of Tsukuba, 
Tsukuba 305-8571, Japan}
\email{inoguchi@math.tsukuba.ac.jp}
\thanks{The second named author is partially supported by 
Kakenhi 19K03461.}
\author[S.-P.~Kobayashi]{Shimpei Kobayashi}
\address{Department of Mathematics, Hokkaido University, 
Sapporo, 060-0810, Japan}
\email{shimpei@math.sci.hokudai.ac.jp}
\thanks{The third named author is partially supported by Kakenhi 
18K03265.} 
\subjclass[2010]{Primary~53A15, Secondary~22E25}
\keywords{Statistical manifolds; 
 The Amari-Chentsov $\alpha$-connection; Lie groups}
\date{\today} 

\pagestyle{plain}

\begin{abstract} 
We show that the statistical manifold 
of normal distributions is homogeneous.
In particular, it admits 
a $2$-dimensional solvable Lie group structure.
In addition, we
give a geometric characterization
of the Amari-Chentsov $\alpha$-connections on
the Lie group.
\end{abstract}
\maketitle
\section*{Introduction}

The set of normal distributions is parametrized by
$\mathbb{R} \times \mathbb{R}^+$ as 
\begin{equation*}
 \mathbb{R} \times \mathbb{R}^+
  \ni \theta =(\mu, \sigma) \mapsto
  p(t, \theta)=
  \dfrac{1}{\sqrt{2\pi \sigma^2}}
  \exp \left\{
-\dfrac{(t-\mu)^2}{2\sigma^2}
       \right\}, 
\quad t \in \mathbb{R},  
\end{equation*}
where $\mu$ is the mean, and $\sigma^2$ is the variance. 
For tangent vectors $X$, $Y$, $Z$
of an manifold $\mathbb{R} \times \mathbb{R}^+$ at $\theta$,
we define
\begin{equation*}
 g^F(X,Y)=E_\theta[(X\log p) (Y \log p) ], \quad
 C(X, Y, Z)=E_\theta[(X\log p) (Y \log p) (Z \log p)], 
\end{equation*}
where
$\displaystyle 
E_\theta[f]=\int_\mathbb{R}f(t)p(t, \theta)\, dt$ 
for an integrable function $f$ on $\mathbb{R}$.
For a constant $\alpha$, we define $\nabla^{(\alpha)}$ by 
\begin{equation*}
 g^F(\nabla^{(\alpha)}_X Y,Z)
  =g^F(\nabla^{g^F}_X Y, Z)-\dfrac{\alpha}{2} C(X,Y,Z), 
\end{equation*}
where $\nabla^{g^F}$ is the Levi-Civita connection 
of the Riemannian metric $g^F$. 
In information geometry, 
$g^F$ and $\nabla^{(\alpha)}$ are well known 
as the \emph{Fisher metric} and the 
\emph{Amari-Chentsov $\alpha$-connection} 
for the space of normal distributions, respectively. 
In the same fashion, 
we have the Fisher metric and the Amari-Chentsov $\alpha$-connection
for spaces of certain probability densities. 
Abstracting an essence from these, 
we reach the notion of statistical manifolds. 

The Fisher metric and the Amari-Chentsov $\alpha$-connection 
for the space of all the positive probability densities 
on a finite set, that is, the space of multinomial distributions, 
are characterized from a viewpoint of statistics, 
which is known as the Chentsov theorem 
and has been generalized for other spaces, 
see \cite{Dow} for example and references therein. 
The Fisher metric and the Amari-Chentsov $\alpha$-connection 
for the space of normal distributions are expressed as
\begin{gather*}
 g^F=\dfrac{dx^2+2dy^2}{y^2}, \\ 
\nabla^{(\alpha)}_{\partial_x}{\partial_x}=\frac{1-\alpha}{2y}
{\partial_y}, 
\ \ 
\nabla^{(\alpha)}_{{\partial_x}}{\partial_y}=
\nabla^{(\alpha)}_{{\partial_y}}{\partial_x}
=
-\frac{1+\alpha}{y}
{\partial_x},
\ \ 
\nabla^{(\alpha)}_{{\partial_y}}{\partial_y}=-\frac{1+2\alpha}{y}{\partial_y},
\end{gather*}
where 
${\partial_x} ={\partial}/ {\partial x}$,
${\partial_y} ={\partial}/ {\partial y}$ and 
$(x,y)=(\mu, \sigma) \in \mathbb{R}\times\mathbb{R}^+$. 

The goal of this article is to characterize affine connections 
on the statistical manifold of normal distributions
with the Fisher metric 
from a \emph{purely differential geometric viewpoint}, 
especially from a viewpoint of homogeneity. 
For this purpose, in Section \ref{sc:Statistica}, we 
discuss Lie groups equipped with a left-invariant statistical structure, 
called \emph{statistical Lie groups},  
see Definition \ref{def:StatLG}.  
In particular we give an explicit method for constructing 
Lie groups equipped with left-invariant statistical structure. 
In the final section, 
we first show the statistical manifold of normal distributions 
is a statistical Lie group. More precisely 
we will show that the statistical manifold of normal distributions 
admits a solvable Lie group structure with respect to which the statistical 
structure is left-invariant, Proposition \ref{prp:solvable}. 
We finally characterize 
left-invariant connections on 
a $2$-dimensional solvable Lie group with a natural left-invariant metric 
under the total symmetry of the covariant derivative $\nabla C$ of 
 the cubic form $C$, Proposition \ref{prp:Charact}.
As a result, we have the following main theorem. 

\begin{Th}\label{thm:Mainresult}
The set $G$ consisting of all normal distributions admits a Lie group structure 
such that the Fischer metric $g^F=(dx^2+2dy^2)/y^2$ is left-invariant.  
Furthermore, for an affine connection $\nabla$ 
such that $(G, g^F, \nabla)$ is a statistical Lie group, 
the following statements are mutually equivalent{\rm:}
\begin{enumerate}
\item $\nabla$ is the Amari-Chentsov $\alpha$-connection.
\item $\nabla^{g^F} C$ is totally symmetric.
\item $\nabla C$ is totally symmetric.
\item $\nabla^{g^F} K$ is totally symmetric.
\item $R = R^*$. 
\end{enumerate}
\end{Th}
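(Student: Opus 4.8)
The plan is to split the five conditions into two groups. I expect conditions (2)--(5) to be equivalent for \emph{any} statistical manifold $(M,g^F,\nabla)$, with no reference to the Lie group structure, whereas the equivalence of these with (1) is where the explicit solvable structure of Proposition~\ref{prp:solvable} and the classification of Proposition~\ref{prp:Charact} must enter. So first I would prove the chain $(2)\Leftrightarrow(3)\Leftrightarrow(4)\Leftrightarrow(5)$ abstractly, and only afterwards tie in (1).

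For the abstract part, introduce the difference tensor $K=\nabla-\nabla^{g^F}$. Since $\nabla$ and $\nabla^{g^F}$ are both torsion-free, $K$ is symmetric, $K_XY=K_YX$, and the Codazzi condition defining a statistical structure gives $C(X,Y,Z)=-2\,g^F(K_XY,Z)$, so that $\hat K(X,Y,Z):=g^F(K_XY,Z)$ is totally symmetric. The equivalence $(2)\Leftrightarrow(4)$ is then immediate: because $\nabla^{g^F}g^F=0$, one has $(\nabla^{g^F}_WC)(X,Y,Z)=-2\,g^F((\nabla^{g^F}_WK)_XY,Z)$, so $\nabla^{g^F}C$ is totally symmetric exactly when $\nabla^{g^F}K$ is. For $(2)\Leftrightarrow(3)$ I would compute the correction $(\nabla^{g^F}_W-\nabla_W)C$, which equals $C(K_WX,Y,Z)+C(X,K_WY,Z)+C(X,Y,K_WZ)$; rewriting each term through $C=-2\hat K$ presents it as a sum of self-contractions of $\hat K$, and a short pair-swap bookkeeping shows this correction is \emph{always} totally symmetric. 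Hence $\nabla C$ is totally symmetric if and only if $\nabla^{g^F}C$ is.

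For $(4)\Leftrightarrow(5)$ I would use the standard curvature decomposition for $\nabla=\nabla^{g^F}+K$ and its dual $\nabla^{*}=\nabla^{g^F}-K$: the bracket terms $[K_X,K_Y]$ coincide, so $R-R^{*}=2\big((\nabla^{g^F}_XK)_Y-(\nabla^{g^F}_YK)_X\big)$, i.e.\ $R=R^{*}$ exactly when $\nabla^{g^F}K$ is symmetric in its first two slots. Since $\hat K$ is totally symmetric and $\nabla^{g^F}$ preserves symmetry in those arguments, $\nabla^{g^F}K$ is automatically symmetric in its last three slots; total symmetry is therefore equivalent to the extra symmetry in the first pair, which is precisely $R=R^{*}$. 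This closes $(2)\Leftrightarrow(3)\Leftrightarrow(4)\Leftrightarrow(5)$.

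It remains to connect (1). The implication $(1)\Rightarrow(2)$ I would check directly from the explicit components of $\nabla^{(\alpha)}$ recorded in the Introduction, computing $\nabla^{g^F}C$ in the coordinate frame and verifying total symmetry. For the converse I would invoke the Lie-theoretic input: by Proposition~\ref{prp:solvable} the manifold $G$ is the $2$-dimensional solvable statistical Lie group carrying the left-invariant $g^F$, and by Proposition~\ref{prp:Charact} the left-invariant connections for which $(G,g^F,\nabla)$ is a statistical Lie group with $\nabla C$ totally symmetric are exactly the Amari--Chentsov $\alpha$-connections; thus $(3)\Rightarrow(1)$, and with the abstract equivalences each of (2)--(5) implies (1). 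The genuinely hard step is the explicit classification underlying Proposition~\ref{prp:Charact}; within the present argument the only delicate points are the sign conventions in $C=-2\hat K$ and the precise form of the curvature identity, which must be tracked carefully so that the four total-symmetry statements line up.
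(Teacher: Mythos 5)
Your proposal is correct and follows essentially the same route as the paper: the equivalence of (2)--(5) is exactly the paper's Lemma \ref{lem:symmetric} (proved abstractly for any statistical manifold via the relation $\nabla=\nabla^{g}-\tfrac12 K$ and the curvature identity for $R-R^{*}$), and the link to (1) is made, as you propose, by combining Proposition \ref{prp:solvable} with the classification in Proposition \ref{prp:Charact} and the identification $p=2\alpha/\lambda$, $\lambda=\sqrt2$. The only (harmless) deviation is in the step $(2)\Leftrightarrow(3)$, where you show the full correction term $(\nabla^{g^F}-\nabla)C$ is totally symmetric while the paper only observes that the antisymmetrizations in the first two slots coincide; both arguments work.
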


Here $K$ is the skewness operator, and 
$R$ and $R^*$ denote the curvature tensor fields 
of $\nabla$ and its dual connection $\nabla^*$, 
respectively.
The precise definitions are given in Sections \ref{sbsc:statistical} and \ref{subsc:cubic}.


\section{Preliminaries}
In this section we recall the definition of statistical manifolds 
and 
introduce the notion of \textit{homogeneous} statistical manifolds.

\subsection{Statistical manifolds}\label{sbsc:statistical}
Let $M$ be a manifold,
$g$ a Riemannian
metric and 
$\nabla$ a torsion free affine connection.
Then $\nabla$ is said to be 
\textit{compatible} with $g$ if
the covariant derivative 
$\nabla g$ is symmetric. 
A pair $(g,\nabla)$ of a Riemannian metric 
and a compatible affine connection is
called a \emph{statistical structure}
on $M$. A manifold $M$ together with a statistical structure
is called a \emph{statistical manifold}. 
In particular 
a statistical manifold 
is traditionally called a \emph{Hessian manifold} 
if $\nabla$ is flat, 
see for example \cite{Shima}.

A Riemannian manifold $(M,g)$ together
with Levi-Civita connection
$\nabla^{g}$ of $g$ is a typical
example of statistical manifold.
In other words, statistical
manifolds
can be regarded as generalizations
of Riemannian manifolds.

The \emph{conjugate connection} or the \emph{dual connection} $\nabla^*$ of
$\nabla$ with respect to $g$ is 
introduced by the following formula:
\begin{equation}
X g(Y,Z)=
g(\nabla_X Y,Z)+g(Y,\nabla^{*}_{X}Z),
\end{equation}
 where $X,Y,Z \in \mathfrak{X}(M)$.
 Obviously $\nabla=\nabla^*$ if and only if $\nabla$ coincides with the 
Levi-Civita connection $\nabla^{g}$. 

\subsection{The associated skewness operator}\label{subsc:cubic}

On a statistical manifold $(M, g,\nabla)$,
the symmetric tensor field $C$ of type $(0, 3)$,
the so-called \textit{cubic form} on $M$, 
 is defined by 
\begin{equation}\label{eq:cubicform}
C(X,Y,Z)=(\nabla_{X} g)(Y,Z),
\end{equation} 
where $X,Y,Z \in \mathfrak{X}(M)$.
Then we can associate a tensor field $K$ of type $(1, 2)$ by 
\begin{equation}\label{eq:K}
g(K(X,Y),Z)=C(X,Y,Z), 
\end{equation}
where $X,Y,Z \in \mathfrak{X}(M)$.
Furthermore, for every vector field $X$,
one can introduce an endomorphism field $K(X)$ by
\begin{equation*}\label{eq:K2}
K(X)Y=K(X,Y).
\end{equation*}
Since $C$ is totally symmetric,
$K(X)$ is self-adjoint with respect to $g$ and
symmetric, \textit{i.e.}, 
\[
 g(K(X)Y,Z)=g(Y,K(X)Z) \quad \mbox{and} \quad
K(X)Y=K(Y)X.
\]
We call this tensor field $K$ 
the \emph{skewness operator} of
$(M,g,\nabla)$. 
The difference between
$\nabla$ and $\nabla^{g}$ is
given by
\[
\nabla-\nabla^{g}=-\frac{1}{2}K.
\] 
\begin{Remark}
The symbol $K$ is sometimes used as the difference itself,   
for example in \cite{NS}. 
\end{Remark}
The Levi-Civita connection
$\nabla^{g}$ is the ``mean'' of $\nabla$ and 
$\nabla^*$, \textit{i.e.},
$\nabla^{g}=\frac{1}{2}(\nabla+\nabla^*)$.
More generally
for any real number $\alpha$,
\begin{equation}
\nabla^{(\alpha)}_XY =\nabla^{g}_XY-\frac
{\alpha}{2}K(X)Y
\end{equation}
defines a torsion free affine connection $\nabla^{(\alpha)}$.
The affine connection $\nabla^{(\alpha)}$ is called the
$\alpha$-\emph{connection}, \cite{AN}. Note that $\nabla^{(1)}=\nabla$
and $\nabla^{(-1)}=\nabla^*$. 
The covariant derivative of $g$ relative to $\nabla^{(\alpha)}$  is
\[
(\nabla^{(\alpha)}_X g)(Y,Z)=\alpha\, C(X,Y,Z).
\]
Thus $(M, g,\nabla^{(\alpha)})$ is statistical for all
$\alpha \in \mathbb{R}$.
\begin{Remark}
As we have seen before, 
for every statistical manifold
$(M, g, \nabla)$, there exists a
naturally associated symmetric trilinear form $C$.
Conversely let $(M, g, C)$ be a Riemannian 
manifold with a symmetric trilinear form $C$. 
Define a tensor field $K$ by
$ g(K(X)Y,Z)=C(X,Y,Z),$ 
and an affine connection $\nabla$ by
$\nabla=\nabla^{g}-K/2$. 
Then  
the triplet $(M, g,\nabla)$ becomes a statistical manifold.
Thus to equip a statistical structure $(g,\nabla)$ 
is equivalent to equip a pair 
$(g,C)$ consisting of a 
Riemannian metric $g$ and a trilinear form $C$.
In fact, Lauritzen \cite[Chapter 4]{Lau} 
introduced the notion of statistical manifold as a Riemannian 
manifold $(M, g)$ together with a trilinear form $C$.
\end{Remark}

Finally we remark that the curvature tensor field $R$ of $\nabla$ is related to 
the Riemannian curvature $R^{g}=R^{(0)}$ of $g$ by 
\[
R(X,Y)Z=
R^{g}(X,Y)Z+\frac{1}{4}[K(X),K(Y)]Z
-\frac{1}{2}\left(
(\nabla^{g}_{X}K)(Y,Z)-(\nabla^{g}_{Y}K)(X,Z)
\right).
\]
We refer to the readers 
Amari and Nagaoka's textbook \cite{AN}
for general theory of statistical manifolds.

\subsection{Covariant derivative of the cubic form $C$}

We have many choices of affine connection $\nabla$ 
compatible with a prescribed Riemannian metric $g$, 
or equivalently, many choices of cubic forms $C$ on $M$. 
In this section, let us consider a condition
$\nabla C$ is totally symmetric. 
This condition is well known in the affine hypersurface theory. 
In fact, 
a statistical manifold with certain conditions 
can be realized as a Blaschke hypersurface in 
the equiaffine space. 
Then the condition $\nabla C$ is totally symmetric 
means that 
the hypersurface is an affine hypersphere, 
which is the most interesting object in affine differential geometry. 
We refer to the readers \cite{NS} for details. 

\begin{Lemma}[\cite{BNS, Op}]\label{lem:symmetric}
 The following statements are mutually equivalent$:$
\begin{enumerate}
\item $\nabla^{g} C$ is totally symmetric.
\item $\nabla C$ is totally symmetric.
\item $\nabla^{g} K$ is totally symmetric.
\item $R = R^*$. 
\end{enumerate}
\end{Lemma}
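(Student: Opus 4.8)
The plan is to reduce all four statements to pointwise tensor identities, comparing $\nabla$ with its Levi--Civita connection through the single relation $\nabla=\nabla^{g}-\tfrac12 K$ and exploiting metric compatibility $\nabla^{g}g=0$. Because $\nabla^{g}g=0$, lowering the contravariant index of $K$ by $g$ commutes with $\nabla^{g}$, and by \eqref{eq:K} the resulting $(0,3)$-tensor is precisely $C$; hence $\nabla^{g}K$ and $\nabla^{g}C$ have identical symmetry type and (1)$\Leftrightarrow$(3) is immediate. It then remains to establish (1)$\Leftrightarrow$(2) and (1)$\Leftrightarrow$(4). I record once and for all that $\nabla^{g}_{W}C$ and $\nabla^{g}_{W}K$ are automatically symmetric in their three non-differentiated slots, since $C$ (equivalently $K$) is symmetric and $\nabla^{g}$ is a tensor derivation; thus ``totally symmetric'' adds only the requirement that the differentiated slot may be interchanged with one of the others.

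For (1)$\Leftrightarrow$(2), I would first use $\nabla-\nabla^{g}=-\tfrac12 K$ to obtain, for vector fields $W,X,Y,Z$,
\[ (\nabla_{W}C)(X,Y,Z)-(\nabla^{g}_{W}C)(X,Y,Z)=\tfrac12\bigl(C(K(W)X,Y,Z)+C(X,K(W)Y,Z)+C(X,Y,K(W)Z)\bigr). \]
The decisive step is to rewrite each summand by means of the total symmetry of $C$ and \eqref{eq:K}, for instance $C(K(W)X,Y,Z)=g(K(W)X,K(Y)Z)$, which recasts the right-hand side as
\[ \tfrac12\bigl(g(K(W)X,K(Y)Z)+g(K(W)Y,K(X)Z)+g(K(W)Z,K(X)Y)\bigr). \]
This is the sum over the three partitions of $\{W,X,Y,Z\}$ into two unordered pairs of the quantity $g(K(\cdot,\cdot),K(\cdot,\cdot))$, and is therefore invariant under every permutation of $W,X,Y,Z$. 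Consequently $\nabla C-\nabla^{g}C$ is totally symmetric, so $\nabla C$ is totally symmetric if and only if $\nabla^{g}C$ is.

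For (1)$\Leftrightarrow$(4), I would insert the curvature identity relating $R$ and $R^{g}$ into the analogous formula for $R^{*}$. Since the skewness operator of $\nabla^{*}=\nabla^{(-1)}$ is $-K$, passing from $R$ to $R^{*}$ leaves the quadratic term $\tfrac14[K(X),K(Y)]$ unchanged and reverses the sign of the linear term, so
\[ R(X,Y)Z-R^{*}(X,Y)Z=-\bigl((\nabla^{g}_{X}K)(Y,Z)-(\nabla^{g}_{Y}K)(X,Z)\bigr). \]
Pairing with an arbitrary vector field and using $\nabla^{g}g=0$ turns the right-hand side into $(\nabla^{g}_{X}C)(Y,Z,\cdot)-(\nabla^{g}_{Y}C)(X,Z,\cdot)$. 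Hence $R=R^{*}$ is equivalent to the symmetry of $\nabla^{g}C$ under exchange of its differentiated slot with the first $C$-slot; together with the automatic symmetry in the remaining slots, this is exactly total symmetry of $\nabla^{g}C$, giving (4)$\Leftrightarrow$(1).

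The only genuinely non-formal step is the rewriting in (1)$\Leftrightarrow$(2): the point is to recognise the combination of $C(K(W)X,Y,Z)$-type terms as the fully symmetric pairing tensor $\sum g(K,K)$, after which total symmetry is visible with no further computation. Everything else is bookkeeping governed by $\nabla-\nabla^{g}=-\tfrac12 K$ and $\nabla^{g}g=0$; in particular the sign bookkeeping in the curvature step must be carried out carefully, but presents no conceptual difficulty.
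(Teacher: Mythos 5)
Your proposal is correct and follows essentially the same route as the paper: the same comparison of $\nabla C$ with $\nabla^{g}C$ via $\nabla-\nabla^{g}=-\tfrac12K$, the same index-lowering identification of $\nabla^{g}C$ with $\nabla^{g}K$, and the same curvature identity $(\nabla^{g}_{X}K)(Y,Z)-(\nabla^{g}_{Y}K)(X,Z)=-R(X,Y)Z+R^{*}(X,Y)Z$. The only difference is that in the step (1)$\Leftrightarrow$(2) you prove the stronger fact that the difference tensor $\nabla C-\nabla^{g}C$ is itself totally symmetric (a sum of $g(K(\cdot,\cdot),K(\cdot,\cdot))$ pairings), whereas the paper only records that its antisymmetrization in the first two slots vanishes; both suffice.
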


Here $R^*$ denotes the curvature tensor field of $\nabla^*$. 
Note that statistical manifolds satisfying the condition $R=R^{*}$ 
have been said to be \emph{conjugate symmetric}, \cite{Lau}. 

The proof of this lemma 
for statistical structures induced by affine hypersurfaces  
still works for our setting. 
We briefly review the proof for the sake of completeness.
 Using the relation $\nabla = \nabla^{g} - \frac{1}{2} K$, we have 
 \[
  (\nabla_X C) (U, V, W) - (\nabla_U C) (X, V, W)
 =(\nabla^{g}_X C) (U, V, W) - (\nabla^{g}_U C) (X, V, W).
 \]
 Thus the conditions $(1)$ and $(2)$ are equivalent.
  Moreover from $C(X, Y, Z) = g (K(X)Y, Z)$, it is easy to see that 
 \[
   (\nabla^{g}_X C) (U, V, W)
 =  g( (\nabla^{g}_X K)(U, V), W).
 \]
 From this, it is easy to see that the conditions $(2)$ and 
 $(3)$ are equivalent. 
 Finally, since
\[
  (\nabla^{g}_X K) (Y, Z)
 - (\nabla^{g}_Y K) (X, Z) 
 = -R(X, Y)Z + R^*(X, Y) Z,
\]
 the conditions $(3)$ and $(4)$ are equivalent.

\subsection{Homogeneous statistical manifolds}
To close this section we introduce the notion of 
homogeneous statistical manifold.
\begin{Definition}\label{def:StatLG}
{\rm Let $(M, g,\nabla)$ be a
statistical manifold and $G$ a Lie group. 
Then $(M, g,\nabla)$ is said 
to be a 
\emph{homogeneous statistical manifold} if 
$G$ acts transitively on $M$ and the action is 
isometric with respect to $g$ and affine with 
respect to $\nabla$. 
}
{\rm In particular, if $G$ is equipped with a 
 statistical structure $(g, \nabla)$ 
such that both  $g$ and $\nabla$ are invariant under left 
translations, then $(G, g, \nabla)$ is a 
homogeneous statistical manifold. The resulting 
homogeneous statistical manifold is called a 
\emph{statistical Lie group}.
}
\end{Definition}
From the next section we will study 
statistical Lie groups.

\section{Left-invariant  connections and left-invariant metrics on Lie groups}
 Let $G$ be a connected 
 Lie group and denote by $\mathfrak{g}$ 
 the Lie algebra of $G$, that is, 
 the tangent space $T_{e}G$ of $G$ at the unit element $ e \in G$. 
 In this section we consider \emph{left-invariant connection}, that is 
affine connections on $G$ which are invariant under left translations by $G$.

Let $\theta$ be the 
Maurer-Cartan form  of $G$. 
By definition, for any tangent vector $X_a$ of $G$ at $a\in G$, we have 
 $\theta _a (X_a)=(dL_{a})^{-1}_{a} X_a\in\mathfrak{g}$. 
 Here $L_a$ denotes the left translation 
 by $a$ in $G$; $L_a : G \ni x \mapsto a x \in G$. 

Hereafter, 
we always assume that Lie groups under consideration will be connected.
\subsection{Left-invariant connections on Lie groups} 
 Take a bilinear map 
 $\mu:\mathfrak{g}\times \mathfrak{g}\to \mathfrak{g}$.
 Then we can define a left-invariant affine connection $\nabla^\mu$ on $G$ by
 its value at the unit element $e \in G$ by
\[
\nabla^{\mu}_{X}Y=\mu(X,Y), \ \ X,Y
\in \mathfrak{g}.
\]
\begin{Proposition}[\cite{Nomizu}]\label{allconnections}
Let $\mathcal{B}_\mathfrak{g}$ be the vector 
 space of all $\mathfrak{g}$-valued 
 bilinear maps on $\mathfrak{g}$ and 
 $\mathcal{A}_G$ the affine space of all 
 left-invariant affine connections on $G$.
 Then the map
\[
\mathcal{B}_{\mathfrak g}\ni \mu\longmapsto \nm \in \mathcal{A}_G 
\] 
 is a bijection between $\mathcal{B}_{\mathfrak g}$ 
 and $\mathcal{A}_G$. The torsion $T^\mu$ of $\nm$ is given by
\[
T^{\mu}(X,Y)=-[X,Y]+\mu(X,Y)-\mu(Y,X)
\]
for all $X$, $Y\in \mathfrak{g}$.
\end{Proposition}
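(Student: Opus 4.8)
The plan is to use the fact that a left-invariant connection is completely pinned down by its behaviour on left-invariant vector fields, which themselves correspond bijectively to elements of $\mathfrak{g}$. Recall that each $X \in \mathfrak{g}$ extends to a unique left-invariant vector field $X^L$ determined by $(X^L)_a = (dL_a)_e X$, that $[X^L, Y^L] = [X,Y]^L$, and that fixing a basis $E_1, \dots, E_n$ of $\mathfrak{g}$ produces a global frame $E_1^L, \dots, E_n^L$ trivializing $TG$. All of the content of the proposition flows from transporting data back and forth between the identity fibre and this global frame.

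First I would define a map $\mathcal{A}_G \to \mathcal{B}_{\mathfrak g}$ inverse to $\mu \mapsto \nm$. Given $\nabla \in \mathcal{A}_G$, left-invariance forces $\nabla_{X^L} Y^L$ to be again left-invariant, hence equal to $\mu(X,Y)^L$ for a unique $\mu(X,Y) \in \mathfrak{g}$. Because $\nabla$ is $C^\infty(G)$-linear in the first slot and obeys the Leibniz rule in the second, and because a constant coefficient kills the derivative term in the Leibniz rule, the assignment $\mu$ is $\mathbb{R}$-bilinear, i.e. $\mu \in \mathcal{B}_{\mathfrak g}$.

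Next I would check that $\mu \mapsto \nm$ genuinely lands in $\mathcal{A}_G$ and inverts the previous map. Writing $X = \sum_i f^i E_i^L$ and $Y = \sum_j h^j E_j^L$, the formula
\begin{equation*}
\nabla^{\mu}_X Y = \sum_{i,j} f^i\bigl( (E_i^L h^j)\, E_j^L + h^j\, \mu(E_i,E_j)^L \bigr)
\end{equation*}
defines $\nm$ unambiguously, and a direct check gives $C^\infty(G)$-linearity in $X$ together with the Leibniz rule in $Y$, so $\nm$ is an affine connection. Left-invariance is immediate since the frame $E_i^L$ and the vectors $\mu(E_i,E_j)\in\mathfrak{g}$ are left-invariant, whence $L_a^* \nm = \nm$ for all $a$. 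Evaluating on left-invariant fields recovers $\nabla^{\mu}_{X^L} Y^L = \mu(X,Y)^L$, so the two constructions are mutually inverse and the asserted bijection holds. The torsion formula is then immediate from $T^\mu(X,Y) = \nabla^{\mu}_{X^L} Y^L - \nabla^{\mu}_{Y^L} X^L - [X^L, Y^L]$ evaluated at $e$: the first two terms contribute $\mu(X,Y) - \mu(Y,X)$ and $[X^L,Y^L]=[X,Y]^L$ contributes $-[X,Y]$.

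I expect no genuine obstacle, as this is a classical result; the only point requiring care is the reconstruction (surjectivity) step, namely that an a priori arbitrary left-invariant connection is recovered by the frame formula from its values on left-invariant fields. This rests on the observation that the connection axioms propagate the identity-fibre data $\mu$ to all of $G$ through the global frame, so that any $\nabla$ restricting to a given $\mu$ must coincide with $\nm$.
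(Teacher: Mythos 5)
Your argument is correct and complete: the correspondence $\nabla \mapsto \mu$ via $\nabla_{X^L}Y^L = \mu(X,Y)^L$, the reconstruction through the global left-invariant frame, and the evaluation of the torsion at $e$ are exactly the standard proof of this classical fact. The paper itself gives no proof, simply citing Nomizu's 1954 article, so there is no divergence of approach to report.
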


Accordingly, 
$\nabla^{\mu}$ is of torsion free 
if and only if 
\begin{equation}\label{eq:torsionfree}
 \mu(X,Y)-\mu(Y,X)=[X,Y], \ 
i.e., \ 
(\mathrm{skew}\>\mu)(X,Y)=\frac{1}{2}[X,Y], 
\end{equation}
where $(\mathrm{skew}\>\mu)$ is 
the skew symmetric part of $\mu$. 
Thus $\nabla^{\mu}$ has the form:
\begin{equation}\label{eq:nablamu}
\nabla^{\mu}_{X}Y=\frac{1}{2}[X,Y]+(\mathrm{sym}\>\mu)(X,Y), 
\end{equation}
where $(\mathrm{sym}\>\mu)$ is 
the symmetric part of $\mu$.

\subsection{Left-invariant metrics on Lie groups} 
 We equip an inner product $\langle\cdot,\cdot\rangle$ 
 on the Lie algebra $\mathfrak{g}$ of a real Lie group $G$ 
 and extend it to a left-invariant Riemannian metric 
 $g=\langle \cdot,\cdot\rangle$ on $G$.
 Here we define a symmetric bilinear map 
 $U:\mathfrak{g}\times\mathfrak{g}\to \mathfrak{g}$ by
\begin{equation}\label{NR}
 2\langle U(X,Y),Z\rangle 
 =\langle [Z,X], Y\rangle +\langle X,[Z,Y]\rangle
\end{equation}
for $X,Y,Z \in \mathfrak{g}$, see \cite[Chapter X.3.]{KN2}.
This formula implies that 
$g$ is bi-invariant if and only if $U=0$.
 
The Levi-Civita connection $\nabla^{g}$ of 
$G$ is given, as a variant of the Koszul formula, 
by 
\begin{equation}\label{Levi-CivitaRelation}
\nabla^{g}_{X}Y=\frac{1}{2}[X,Y]+U(X,Y)
\end{equation}
for $X, Y \in \mathfrak g$.
 Hence $\nabla^{g}$ is a left-invariant connection 
 $\nabla^{\mu}$ with the bilinear map 
$\mu(X, Y)=\frac{1}{2}[X,Y] +U(X, Y)$. 
Accordingly, we have
\begin{Proposition}
The Levi-Civita connection $\nabla^{g}$ is a 
left-invariant 
connection determined by the bilinear map $\mu$ such that 
\begin{equation}\label{eq:skew-symm}
(\sk \mu)(X,Y)=\frac{1}{2}[X,Y],
\ \ 
(\sym \mu)(X,Y)=U(X,Y).
\end{equation}
\end{Proposition}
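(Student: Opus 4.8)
The plan is to identify the bilinear map $\mu \in \mathcal{B}_\mathfrak{g}$ that corresponds to $\nabla^{g}$ under the bijection of Proposition \ref{allconnections}, and then read off its symmetric and skew-symmetric parts directly. First I would observe that $\nabla^{g}$ is itself left-invariant: every left translation $L_a$ is an isometry of the left-invariant metric $g$, and the Levi-Civita connection is uniquely determined by $g$, so it is preserved by each $L_a$. Hence Proposition \ref{allconnections} applies and produces a unique $\mu \in \mathcal{B}_\mathfrak{g}$ with $\nabla^{g} = \nabla^{\mu}$, characterized by $\mu(X,Y) = \nabla^{g}_X Y$ for $X, Y \in \mathfrak{g}$. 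The Koszul-type formula \eqref{Levi-CivitaRelation} then exhibits this $\mu$ explicitly as $\mu(X,Y) = \frac{1}{2}[X,Y] + U(X,Y)$.

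It then remains to decompose $\mu(X,Y) = (\sk\mu)(X,Y) + (\sym\mu)(X,Y)$ into its skew-symmetric and symmetric parts. Here I would invoke two elementary symmetry facts: the Lie bracket is skew-symmetric in its two arguments, so the term $\frac{1}{2}[X,Y]$ lies entirely in the skew-symmetric part; and $U$ is symmetric, because its defining relation \eqref{NR} is manifestly unchanged under interchanging $X$ and $Y$. By uniqueness of the symmetric/skew-symmetric decomposition of a bilinear map, the $\frac{1}{2}[X,Y]$ term therefore accounts for $\sk\mu$ and the $U(X,Y)$ term for $\sym\mu$, which is precisely \eqref{eq:skew-symm}.

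There is essentially no serious obstacle in this argument: once $\nabla^{g}$ is recognized as left-invariant and the Koszul-type formula is in hand, the statement reduces to the uniqueness of the decomposition of a bilinear map into symmetric and skew-symmetric components. The only point that merits explicit comment is the left-invariance of $\nabla^{g}$, which is exactly what licenses the application of Proposition \ref{allconnections}.
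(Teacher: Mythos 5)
Your proposal is correct and follows essentially the same route as the paper: both identify $\mu(X,Y)=\nabla^{g}_{X}Y=\frac{1}{2}[X,Y]+U(X,Y)$ via the Koszul-type formula \eqref{Levi-CivitaRelation} and read off the decomposition from the skew-symmetry of the bracket and the symmetry of $U$. The only addition is your explicit justification that $\nabla^{g}$ is left-invariant, which the paper leaves implicit.
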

\section{Statistical structures on Lie groups}\label{sc:Statistica}

Let $G$ be a Lie group with a 
left-invariant Riemannian metric $g=\langle\cdot,\cdot\rangle$ 
and a left-invariant affine connection $\nabla^{\mu}$. 
The covariant derivative $C=\nabla^{\mu} g$ 
is computed as:
\begin{align*}
C(X,Y,Z)
=-\langle \mu(X,Y),Z\rangle
-\langle Y,\mu(X,Z)\rangle.
\end{align*}
Since we have  
$C(Y,X,Z)=-\langle \mu(Y,X),Z\rangle
-\langle X,\mu(Y,Z)\rangle$ and 
$C(Y,Z,X)=-\langle \mu(Y,Z),X\rangle
-\langle Z,\mu(Y,X)\rangle$ 
analogously, 
the total symmetry condition of $C$ is
\[
\langle \mu(X,Y)-\mu(Y,X),Z\rangle
=
\langle X,\mu(Y,Z)\rangle
-\langle Y, \mu(X,Z)\rangle.
\]
 Then by using the torsion free condition in \eqref{eq:torsionfree} and 
 the form of $\mu$ in \eqref{eq:nablamu}, 
 we have 
\begin{align*}
\langle [X,Y],Z\rangle
=
\langle X,(\mathrm{sym}\>\mu)(Y,Z)
\rangle
-\langle Y, (\mathrm{sym}\>\mu)(X,Z)
\rangle
+\frac{1}{2}
\langle X, [Y, Z] \rangle
- \frac{1}{2}\langle Y, [X, Z]
\rangle.
\end{align*}
Finally using the definition of $U(X, Y)$ in \eqref{NR}, we have 
\begin{equation}\label{leftinvariant}
\langle U(Y,Z),X\rangle
-\langle U(X,Z),Y\rangle
=\langle (\mathrm{sym}\>\mu)(Y,Z),X
\rangle
-\langle (\mathrm{sym}\>\mu)(X,Z),Y
\rangle.
\end{equation}
 Thus when $g$ is bi-invariant, 
 by \eqref{eq:skew-symm} the total symmetry condition is 
\[
\langle X, U(Y,Z)
\rangle
=\langle Y, U(X,Z)
\rangle.
\]
Now let us put $\nu:=\mathrm{sym}\>\mu$ then 
we obtain the following recipe for
constructing statistical Lie groups.

\begin{Proposition}
Let $G$ a Lie group equipped with a left-invariant metric $g$ and 
the symmetric bilinear map $U$ defined in \eqref{NR}. 
Then every left-invariant connection $\nabla$ compatible with $g$ 
is represented as 
\[
\nabla_{X}Y=\nu(X,Y)+\frac{1}{2}[X,Y]
\]
for some symmetric bilinear map 
$\nu:\mathfrak{g}\times\mathfrak{g}\to\mathfrak{g}$ satisfying
\[
\langle U(Y,Z) -\nu(Y,Z),X\rangle
=
\langle Y,  U(X,Z)- \nu(X,Z)\rangle.
\]
\end{Proposition}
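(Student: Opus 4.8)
The plan is to package the computations carried out in the preceding paragraphs and apply them to a connection compatible with $g$ in the sense required for a statistical structure. The starting point is the bijection of Proposition~\ref{allconnections}: any left-invariant connection $\nabla$ equals $\nabla^{\mu}$ for a unique $\mathfrak{g}$-valued bilinear map $\mu$ on $\mathfrak{g}$. I would then impose, in turn, the two defining conditions of a statistical structure, namely torsion-freeness of $\nabla$ and total symmetry of the cubic form $C=\nabla^{\mu}g$.

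First, torsion-freeness. By the formula for $T^{\mu}$ in Proposition~\ref{allconnections}, this is exactly the condition \eqref{eq:torsionfree}, that is $(\sk\mu)(X,Y)=\frac{1}{2}[X,Y]$. Writing $\mu=\sk\mu+\sym\mu$ and setting $\nu:=\sym\mu$, which is symmetric by construction, this produces the form \eqref{eq:nablamu}, namely $\nabla_{X}Y=\nu(X,Y)+\frac{1}{2}[X,Y]$, which is precisely the claimed representation.

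Second, compatibility. Here I would use the explicit value $C(X,Y,Z)=-\langle\mu(X,Y),Z\rangle-\langle Y,\mu(X,Z)\rangle$ computed above. Since $\langle\cdot,\cdot\rangle$ is symmetric, one checks directly that $C$ is automatically symmetric in its last two arguments, so total symmetry of $C$ reduces to invariance under the single transposition of the first two arguments, $C(X,Y,Z)=C(Y,X,Z)$. Expanding this relation and substituting the torsion-free form together with the definition \eqref{NR} of $U$ gives precisely \eqref{leftinvariant}. Rearranging \eqref{leftinvariant} so as to collect the terms paired with $X$ and with $Y$, and using the symmetry of $\langle\cdot,\cdot\rangle$ once more, yields the stated constraint $\langle U(Y,Z)-\nu(Y,Z),X\rangle=\langle Y,U(X,Z)-\nu(X,Z)\rangle$.

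The argument is essentially bookkeeping and presents no serious obstacle; the one point deserving care is the reduction of total symmetry of $C$ to the single transposition $C(X,Y,Z)=C(Y,X,Z)$. The key observation is that $(\nabla_{X}g)(Y,Z)$ is symmetric in $(Y,Z)$ for any connection, which is immediate here from the symmetry of $\langle\cdot,\cdot\rangle$ in the explicit expression for $C$, so that the cyclic symmetry conditions collapse to this one relation; overlooking it could lead to imposing redundant or even inconsistent constraints on $\nu$. Finally, I would remark that every implication used is reversible: given any symmetric $\nu$ satisfying the displayed constraint, setting $\mu(X,Y)=\nu(X,Y)+\frac{1}{2}[X,Y]$ and $\nabla:=\nabla^{\mu}$ retraces \eqref{eq:torsionfree} and \eqref{leftinvariant} backwards to show that $\nabla$ is torsion-free with totally symmetric $C$, hence compatible with $g$. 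This gives the full recipe, identifying the compatible left-invariant connections with the symmetric bilinear maps $\nu$ obeying the constraint.
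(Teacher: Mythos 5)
Your proposal is correct and follows essentially the same route as the paper: it uses the bijection of Proposition~\ref{allconnections}, imposes torsion-freeness via \eqref{eq:torsionfree} to get the form $\nabla_XY=\nu(X,Y)+\frac{1}{2}[X,Y]$ with $\nu=\sym\mu$, and then reduces total symmetry of $C=\nabla g$ to the single transposition in the first two slots, arriving at \eqref{leftinvariant}. The observation that symmetry in the last two arguments is automatic, and the remark on reversibility, are both consistent with (and make explicit) what the paper does.
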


\begin{Corollary}
 Let $G$ be a Lie group equipped with a 
 left-invariant statistical structure $(g,\nabla)$.
 Then the following statements hold$:$
\begin{enumerate}
\item 
The connection $\nabla$ is 
represented as 
\[
\nabla_{X}Y=\frac{1}{2}[X,Y] + U(X,Y)-\frac{1}{2}K(X)Y  
\]
for a self-adjoint symmetric bilinear map $K$, \textit{i.e.}, 
$K:\mathfrak{g}\times\mathfrak{g}\to
\mathfrak{g}$ satisfying
\[
\langle K(X)Z,Y\rangle
=\langle Z,K(X)Y\rangle, \quad K(X)Y=K(Y)X.
\]

\item 
Furthermore, 
if the left-invariant statistical structure 
is  bi-invariant, 
then the connection is expressed as
\[
\nabla_XY=\frac{1}{2}[X,Y]-\frac{1}{2}K(X)Y 
\]
for 
an $\mathrm{Ad}(G)$-invariant self-adjoint symmetric bilinear map 
$K$. 
\end{enumerate}
\end{Corollary}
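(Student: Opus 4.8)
The plan is to read off the decomposition directly from two formulas already established: the universal relation $\nabla - \nabla^{g} = -\tfrac{1}{2}K$ valid on any statistical manifold (Section~\ref{subsc:cubic}), and the Koszul-type formula \eqref{Levi-CivitaRelation} for the Levi-Civita connection of a left-invariant metric. First I would observe that, since both $g$ and $\nabla$ are left-invariant, the Levi-Civita connection $\nabla^{g}$ is left-invariant as well, and hence so is the difference tensor $K = -2(\nabla - \nabla^{g})$; thus $K$ is completely determined by a bilinear map $\mathfrak{g}\times\mathfrak{g}\to\mathfrak{g}$, $(X,Y)\mapsto K(X)Y$. Substituting $\nabla^{g}_{X}Y = \tfrac{1}{2}[X,Y] + U(X,Y)$ into $\nabla = \nabla^{g} - \tfrac{1}{2}K$ then yields
\[
\nabla_{X}Y = \tfrac{1}{2}[X,Y] + U(X,Y) - \tfrac{1}{2}K(X)Y,
\]
which is the asserted expression in (1).

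Next I would verify the algebraic properties of $K$. The cubic form $C(X,Y,Z) = (\nabla_{X}g)(Y,Z)$ associated to the statistical structure is totally symmetric, and $K$ is defined by $g(K(X)Y,Z) = C(X,Y,Z)$. Symmetry of $C$ in its last two slots gives the self-adjointness $g(K(X)Y,Z) = g(Y,K(X)Z)$, while symmetry of $C$ in its first two slots gives $K(X)Y = K(Y)X$; these are exactly the conditions claimed. Alternatively, one can obtain (1) entirely within the Lie-algebra picture by starting from the preceding Proposition, writing $\nabla_{X}Y = \nu(X,Y) + \tfrac{1}{2}[X,Y]$ and setting $K(X)Y := 2\big(U(X,Y)-\nu(X,Y)\big)$; then symmetry of $\nu$ and $U$ give $K(X)Y=K(Y)X$, and after using this symmetry the compatibility constraint on $\nu$ translates precisely into the self-adjointness of each $K(X)$.

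For part (2), I would invoke the remark following \eqref{NR}, that a left-invariant metric is bi-invariant if and only if $U=0$; feeding $U=0$ into the formula from (1) gives $\nabla_{X}Y = \tfrac{1}{2}[X,Y] - \tfrac{1}{2}K(X)Y$. Finally, bi-invariance of the statistical structure means that $g$, hence $\nabla^{g}$, together with $\nabla$ are all invariant under right translations as well, so $K$ is a bi-invariant tensor field, and a left-invariant tensor is bi-invariant exactly when the associated multilinear map on $\mathfrak{g}$ is $\mathrm{Ad}(G)$-invariant. The one step requiring genuine care is this last equivalence: one must check that right-invariance of $K$ as a $(1,2)$-tensor field is the same as $\mathrm{Ad}(g)\,K(X)Y = K(\mathrm{Ad}(g)X)\,\mathrm{Ad}(g)Y$ for all $g\in G$. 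Everything else reduces to direct substitution into formulas already derived.
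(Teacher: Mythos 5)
Your proposal is correct and follows essentially the same route as the paper: the corollary is an immediate consequence of combining the decomposition $\nabla=\nabla^{g}-\tfrac{1}{2}K$ (equivalently, setting $K=2(U-\nu)$ in the preceding Proposition) with the Koszul-type formula \eqref{Levi-CivitaRelation}, and part (2) follows from $U=0$ for bi-invariant metrics together with the standard identification of bi-invariant tensors with $\mathrm{Ad}(G)$-invariant multilinear maps. Both of the routes you sketch are the intended argument, and the properties of $K$ you verify are exactly those inherited from the total symmetry of the cubic form.
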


We remark that one can reformulate the above corollary 
by using $C$, that is, a trilinear map on $\mathfrak g$, 
instead of $K$. 

\section{The statistical manifold of normal distributions}

In this section we prove that the statistical manifold 
of normal distributions is homogeneous. 
More precisely, we prove that the statistical manifold of normal distributions is 
identified with a $2$-dimensional solvable Lie group equipped with a 
 left-invariant 
statistical structure, Proposition \ref{prp:solvable}. Next, 
we characterize the Amari-Chentsov $\alpha$-connection 
on the statistical manifold of normal distributions 
in terms of the covariant derivative of the cubic form $\nabla C$.

\subsection{Two dimensional Lie groups}
In this subsection we give some explicit examples of $2$-dimensional 
statistical Lie groups.
It is known \cite{Bry} that every $2$-dimensional Lie group is
either
abelian or non-abelian and isomorphic to 
\[
\mathfrak{g}=\left\{
\left(
\begin{array}{cc}
v & u\\
0 & 0
\end{array}
\right)
\ 
\biggr \vert \ u,v\in
\mathbb{R}
\right\}.
\]
The Lie algebra $\mathfrak{g}$ is generated by 
\[
E_1=\left(
\begin{array}{cc}
0 & 1\\
0 & 0
\end{array}
\right) \ \ \mbox{and}\ \ 
E_2=\left(
\begin{array}{cc}
1 & 0\\
0 & 0
\end{array}
\right)
\] 
with commutation relation 
$[E_1,E_2]=-E_1$.
The simply connected and connected Lie group $G$ 
corresponding to $\mathfrak{g}$ is 
\begin{equation}\label{eq:two-G}
G=\left\{
\left(
\begin{array}{cc}
y & x\\
0 & 1
\end{array}
\right)\>\biggr|\>x,y\in
\mathbb{R},\ \ y>0
\right\}
\end{equation}
with global coordinate system $(x,y)$. 
Note that $G$ is the Lie group of orientation 
preserving affine transformations of the real line $\mathbb{R}$. We denote by 
$\psi$, the inverse map of this global coordinate system. 
Then a direct computation shows that  
$\psi :\mathbb R \times \mathbb R_+ \to G$ satisfies
 \[
  \psi(x, y)^{-1} \partial_x  \psi(x, y)  = y^{-1} E_1 \quad \mbox{and}\quad 
 \psi(x, y)^{-1} \partial_y  \psi(x, y)  = y^{-1} E_2.
 \]
Thus for a constant $\lambda>0$, we put 
\[
\{e_1:=E_1,\ \ e_2=\lambda^{-1}E_2\}.
\]
Note that $[e_1,e_2]=-\lambda^{-1}e_1$.

We introduce a left-invariant Riemannian metric $g$ 
for which $\{e_1,e_2\}$ is orthonormal.
Then the induced left-invariant metric $g$ is 
\[
g=\frac{dx^2+\lambda^2dy^2}{y^2}.
\]
When $\lambda=1$, the resulting Riemannian manifold 
is the hyperbolic plane of curvature $-1$. 
In case $\lambda=\sqrt{2}$, the metric $g$ is the Fisher metric of 
the space of the normal distributions.

The symmetric bilinear map $U$ in \eqref{NR} is computed as
$2\langle U(e_1,e_1),e_1\rangle =0$ and 
\[
 2\langle U(e_1,e_1),e_2\rangle =
\langle e_1, [e_2, e_1]\rangle +\langle e_1, [e_2, e_1]\rangle =
2\lambda^{-1}.
\]
Thus $U(e_1,e_1)=\lambda^{-1}e_2$.
Similarly 
we have $U(e_1,e_2)=-\lambda^{-1}e_1/2$ and $U(e_2,e_2)=0$.
 The Levi-Civita connection is described as
\[
\nabla^{g}_{e_1}e_{1}=\frac{1}{\lambda}e_2,\ \ 
\nabla^{g}_{e_1}e_{2}=-\frac{1}{\lambda}e_{1},
\ \ 
\nabla^{g}_{e_2}e_{1}=0,\ \ 
\nabla^{g}_{e_2}e_{2}=0.
\]
The Amari-Chentsov
$\alpha$-connection of the statistical manifold 
of the normal distributions is naturally extended to the 
following one-parameter family 
$\{\nabla^{(\alpha)}\>|\>\alpha\in\mathbb{R}\}$ 
of connections on $G$:
\[
\nabla^{(\alpha)} _{\partial_x}{\partial_x}
=\frac{1-\alpha}{\lambda^2 y}{\partial_y},
\ \ 
\nabla^{(\alpha)} _{{\partial_x}}{\partial_y}=
\nabla^{(\alpha)} _{{\partial_y}}{\partial_x}
=
-\frac{1+\alpha}{y}{\partial_x},
\ \ 
\nabla^{(\alpha)} _{{\partial_y}}{\partial_y}
=-\frac{1+2\alpha}{y}{\partial_y}. 
\]
It should be remarked that every connection $\nabla^{(\alpha)}$ is left-invariant. In fact 
the table of covariant derivatives 
with respect to $\nabla^{(\alpha)}$ is 
rephrased as
\begin{equation}\label{eq:table}
\nabla^{(\alpha)} _{e_1}e_1=
\frac{1-\alpha}{\lambda}e_2,
\  \
\nabla^{(\alpha)} _{e_1}e_2=-
\frac{1+\alpha}{\lambda}e_1,
\ \ 
\nabla^{(\alpha)} _{e_2}e_1=-
\frac{\alpha}{\lambda}e_1,
\ \ 
\nabla^{(\alpha)} _{e_2}e_2=-
\frac{2\alpha}{\lambda}e_2.
\end{equation}
\begin{Proposition}\label{prp:solvable}
The statistical manifold 
$\left(
\{(x,y)\in\mathbb{R}^2\>|\>y>0\}, g, 
\nabla^{(\alpha)} \right)$ of the normal distributions 
mentioned in Introduction 
is identified with  the solvable Lie group $G$ in \eqref{eq:two-G}
 equipped with a left-invariant metric  $g=(dx^2+2dy^2)/y^2$ and a left-invariant 
 connection $\nabla^{(\alpha)}$ in \eqref{eq:table} with $\lambda=\sqrt{2}$. 
Furthermore, the skewness operator is given by 
\[
K(e_1,e_1)=\sqrt{2}e_2,
\ \ 
K(e_1,e_2)=\sqrt{2}e_1,
\ \ 
K(e_2,e_2)=2\sqrt{2}e_2.
\]
\end{Proposition}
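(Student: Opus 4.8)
The plan is to carry out the identification in two stages---matching first the metric and then the connection---and to read off the skewness operator at the end. The bridge between the two descriptions is the relation between the coordinate frame $\{\partial_x,\partial_y\}$ and the left-invariant frame $\{e_1,e_2\}$, already encoded in the Maurer--Cartan identities $\psi^{-1}\partial_x\psi=y^{-1}E_1$ and $\psi^{-1}\partial_y\psi=y^{-1}E_2$ recorded above. Reading these backwards shows that the left-invariant extension of $e_1=E_1$ is $y\,\partial_x$ and that of $e_2=\lambda^{-1}E_2$ is $\lambda^{-1}y\,\partial_y$, equivalently $\partial_x=y^{-1}e_1$ and $\partial_y=\lambda y^{-1}e_2$.

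First I would verify the metric. Computing $g(y\partial_x,y\partial_x)$, $g(\lambda^{-1}y\partial_y,\lambda^{-1}y\partial_y)$ and the cross term directly from $g=(dx^2+\lambda^2 dy^2)/y^2$ shows that $\{e_1,e_2\}$ is orthonormal, and specializing to $\lambda=\sqrt2$ gives $g=(dx^2+2dy^2)/y^2$, the Fisher metric of the Introduction. This settles the metric part of the identification.

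Next I would translate the coordinate expression for $\nabla^{(\alpha)}$ from the Introduction into the frame $\{e_1,e_2\}$ and check that it reproduces the table \eqref{eq:table}. This is a direct computation: one expands, for instance, $\nabla^{(\alpha)}_{y\partial_x}(y\partial_x)=y\,\nabla^{(\alpha)}_{\partial_x}(y\partial_x)$ using $C^\infty$-linearity in the first slot and the derivation property in the second, substitutes the stated coordinate covariant derivatives, and re-expresses the answer in the frame via $\partial_x=y^{-1}e_1$, $\partial_y=\lambda y^{-1}e_2$. For the $e_1e_1$ term the Leibniz correction $(\partial_x y)\partial_x$ vanishes, whereas for $\nabla^{(\alpha)}_{e_2}e_2$ and the mixed terms the correction $(\partial_y y)\partial_y$ survives and must be retained. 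The one genuine bookkeeping hazard in the whole argument is exactly this derivation term in the second argument; keeping it straight is what makes the $y$-factors cancel correctly to produce the constant coefficients in \eqref{eq:table}.

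Finally I would extract $K$ from $\nabla^{(1)}-\nabla^g=-\tfrac12 K$, comparing the $\alpha=1$ specialization of \eqref{eq:table} against the Levi-Civita table already computed above: for example $\nabla^{(1)}_{e_1}e_1=0$ versus $\nabla^g_{e_1}e_1=\lambda^{-1}e_2$ gives $K(e_1,e_1)=2\lambda^{-1}e_2=\sqrt2\,e_2$ at $\lambda=\sqrt2$, and the two remaining values follow the same way. Equivalently one may compute the cubic form $C(e_i,e_j,e_k)=-g(\nabla^{(1)}_{e_i}e_j,e_k)-g(e_j,\nabla^{(1)}_{e_i}e_k)$ directly---the term $e_i g(e_j,e_k)$ dropping out because $g$ is constant on the left-invariant frame---and recover $K$ from $g(K(X)Y,Z)=C(X,Y,Z)$. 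The self-adjointness and the symmetry $K(X)Y=K(Y)X$ then provide a built-in consistency check on the three computed values.
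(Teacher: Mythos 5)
Your proposal is correct and follows essentially the same route as the paper, whose argument for this proposition consists precisely of the preceding computations: the frame identification $e_1=y\partial_x$, $e_2=\lambda^{-1}y\partial_y$ coming from the Maurer--Cartan identities, the verification that this frame is $g$-orthonormal, the translation of the coordinate table for $\nabla^{(\alpha)}$ into the left-invariant table \eqref{eq:table}, and the extraction of $K$ from $\nabla-\nabla^{g}=-\tfrac12 K$ against the Levi-Civita table. All of your numerical values check out, including the Leibniz corrections you flag.
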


\begin{Remark}
Under the identification of the statistical manifold of normal distributions 
with the solvable Lie group $G$ given by \eqref{eq:two-G}, 
$G$ acts on the real line $\mathbb{R}$ as affine transformations. 
One can see that 
$(\mu,\sigma)^{-1}: \mathbb{R} \ni t \mapsto (t-\mu)/\sigma \in \mathbb{R}$.  
This implies a transformation of random variables. 
Every normal distribution $(\mu,\sigma)\in G$ is 
translated to the standard normal distribution $(0,1)$ 
by left translation $(\mu,\sigma)^{-1}$. 
This is nothing but the standardized form 
of a random variable of $(\mu,\sigma)$.
\end{Remark}

\subsection{A characterization of the $\alpha$-connections}
Finally we give a characterization of the $\alpha$-connections 
on the statistical manifold of normal distributions in terms 
of the total symmetry of $\nabla C$. 
\begin{Proposition}\label{prp:Charact}
 Let $G$ be a $2$-dimensional solvable Lie group defined in \eqref{eq:two-G}
 with a left-invariant metric $g = (d x^2  + \l^2 d y^2)/y^2$. If 
 $G$ admits a left-invariant statistical structure $(G, g, \nabla)$ satisfying 
 one of the conditions in Lemma $\ref{lem:symmetric}$. 
 Then $\nabla$ is 
 given by 
\begin{align*}
\nabla_{e_1} e_1 & =\left(\frac{1}{\lambda}- \frac{p}{2} \right)e_2, \quad 
\nabla_{e_1} e_2 =\left(- \frac{1}{\lambda}- \frac{p}{2} \right) e_1, 
\\
\nabla_{e_2} e_1 &= - \frac{p}{2} e_1, \quad
\nabla_{e_2} e_2 = -p e_2, 
\end{align*}
 where $p \in \mathbb R$ and $\{e_1, e_2\}$ is 
the left-invariant orthonormal frame field as before.
\end{Proposition}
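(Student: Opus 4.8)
The plan is to reduce the problem to a finite linear computation on the two-dimensional Lie algebra $\mathfrak g$. By the Corollary in Section \ref{sc:Statistica}, any left-invariant statistical structure on $G$ has $\nabla = \nabla^{g} - \frac{1}{2}K$ with $K$ the self-adjoint symmetric skewness operator; equivalently it is encoded by the totally symmetric cubic form $C(X,Y,Z) = g(K(X)Y,Z)$. On the two-dimensional space $\mathfrak g$ such a $C$ is determined by its four independent components $C(e_1,e_1,e_1)$, $C(e_1,e_1,e_2)$, $C(e_1,e_2,e_2)$, $C(e_2,e_2,e_2)$, which I will call $a,b,d,h$. I would first record the Levi-Civita connection computed earlier, namely $\nabla^{g}_{e_1}e_1 = \lambda^{-1}e_2$, $\nabla^{g}_{e_1}e_2 = -\lambda^{-1}e_1$, $\nabla^{g}_{e_2}e_1 = 0$, $\nabla^{g}_{e_2}e_2 = 0$, emphasizing that $\nabla^{g}_{e_2}$ annihilates the whole frame.

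By Lemma \ref{lem:symmetric} the four hypotheses coincide, so I am free to work with the most convenient one, condition (1): $\nabla^{g} C$ is totally symmetric. I would compute $(\nabla^{g}_{e_i}C)(e_j,e_k,e_l)$ from the product rule; since $\{e_i\}$ is left-invariant, each function $C(e_j,e_k,e_l)$ is constant and the derivative terms drop out, leaving only the three connection terms. The decisive simplification is that $\nabla^{g}_{e_2}$ kills $e_1$ and $e_2$, so $(\nabla^{g}_{e_2}C)$ vanishes identically; only the four components $(\nabla^{g}_{e_1}C)(e_j,e_k,e_l)$ can be nonzero, and a short computation gives them as $-3b/\lambda$, $(a-2d)/\lambda$, $(2b-h)/\lambda$, $3d/\lambda$ for the index patterns $(1,1,1)$, $(1,1,2)$, $(1,2,2)$, $(2,2,2)$ respectively.

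Imposing total symmetry of the resulting $(0,4)$-tensor then amounts to matching each $\nabla^{g}_{e_1}$-component against the corresponding $\nabla^{g}_{e_2}$-component carrying the same multiset of indices, all of which vanish. This forces $a-2d=0$, $2b-h=0$, and $3d=0$, whence $d=0$, $a=0$, $h=2b$, with $b$ entirely free; the component $(\nabla^{g}_{e_1}C)(e_1,e_1,e_1)=-3b/\lambda$ carries the multiset $\{1,1,1,1\}$ and imposes no constraint. Setting $p:=b$, the skewness operator becomes $K(e_1)e_1 = p\,e_2$, $K(e_1)e_2 = K(e_2)e_1 = p\,e_1$, $K(e_2)e_2 = 2p\,e_2$, and substituting into $\nabla = \nabla^{g} - \frac{1}{2}K$ yields exactly the asserted table.

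I expect the only real obstacle to be the organized bookkeeping of the total-symmetry conditions on a symmetric four-tensor; this is precisely what is tamed by the observation that $\nabla^{g}_{e_2}$ annihilates the orthonormal frame, which collapses every constraint into the vanishing of a single $\nabla^{g}_{e_1}$-component. As a consistency check one could instead use condition (3), that $\nabla^{g} K$ be totally symmetric, or verify the output against the $\alpha$-connection of Proposition \ref{prp:solvable}, which corresponds to $\lambda = \sqrt{2}$ and $p = \sqrt{2}$.
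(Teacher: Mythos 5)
Your proof is correct and follows essentially the same route as the paper: both reduce to a linear computation on $\mathfrak{g}$ using that the frame is left-invariant and that $\nabla^{g}_{e_2}$ annihilates it, derive the same constraints forcing the cubic form into the one-parameter family $C_{111}=C_{122}=0$, $C_{222}=2C_{112}$, and then substitute into $\nabla=\nabla^{g}-\tfrac12 K$. The only (cosmetic) difference is that you impose condition (1) on $\nabla^{g}C$ while the paper imposes condition (3) on $\nabla^{g}K$; in the orthonormal frame these are the identical component equations.
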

\begin{proof}
 The condition $(3)$ in Lemma \ref{lem:symmetric} is equivalent to
\[
 (\nabla^{g}_{e_1} K) (e_2, e_2) = 
 (\nabla^{g}_{e_2} K) (e_1, e_2), \quad 
 (\nabla^{g}_{e_1} K) (e_2, e_1) = 
 (\nabla^{g}_{e_2} K) (e_1, e_1).
\] 
 Since $\nabla^{g}_{e_2} e_j=0 \;(j=1, 2)$, we can compute the above equations
 as
\begin{gather*}
K_{22}^1 e_2 -K_{22}^2 e_1 + 2 \sum_{\ell=1}^{2} K_{12}^{\ell} e_{\ell}=0, \\
K_{21}^{1} e_2 -K_{21}^2 e_1 +\sum_{\ell=1}^{2} K_{11}^{\ell} e_{\ell}- 
 \sum_{\ell=1}^{2} K_{22}^{\ell} e_{\ell}=0.
\end{gather*}
Here we write $K(e_i, e_j) = \sum_{\ell=1}^2 K_{ij}^{\ell} e_{\ell}$. 
 Thus the skewness operator $K$ can be explicitly given as
\begin{equation}\label{eq:skewness}
 K(e_1, e_1) = p e_2, \quad 
 K(e_1, e_2)= K(e_2, e_1) = p e_1,  \quad 
 K(e_2, e_2) = 2 p e_2.
\end{equation}
 By using the relation $\nabla = \nabla^{g}- \frac{1}{2} K$,
 the  claim follows.
\end{proof}

By setting $p = 2\lambda^{-1} \alpha$ with $\lambda = \sqrt{2}$ and 
 by using Lemma \ref{lem:symmetric}, we arrive at Theorem \ref{thm:Mainresult}.

\bibliographystyle{plain}
\def\cprime{$'$}

\end{document}